\newcommand{\K}{\mathbb{K}}
\newtheorem{defi}{Definition}
\newtheorem{con}[defi]{Conjecture}
\newtheorem{teor}[defi]{Theorem}
\newtheorem{lema}[defi]{Lemma}
\newtheorem{prp}[defi]{Proposition}
\newenvironment{proof}{{\it Proof:}}{\hspace{\stretch{1}}\rule{1ex}{1ex}}
\titleformat*{\section}{\normalsize\bfseries}
\titleformat*{\subsection}{\normalsize\bfseries}
\titleformat*{\subsubsection}{\normalsize\bfseries}
\titleformat*{\paragraph}{\normalsize\bfseries}
\titleformat*{\subparagraph}{\normalsize\bfseries}
\begin{document}

\begin{center}
{\normalsize \bf IMAGES OF MULTILINEAR POLYNOMIALS OF DEGREE UP TO FOUR ON UPPER TRIANGULAR MATRICES}\\
{\normalsize Pedro S. Fagundes}\footnote{pedrosfmath@gmail.com\\ Universidade Federal de São Paulo, Instituto de Ciência de Tecnologia, SP, Brazil\\  }{\normalsize and Thiago C. de Mello}\footnote{tcmello@unifesp.br\\ Universidade Federal de São Paulo, Instituto de Ciência de Tecnologia, SP, Brazil} 
\end{center}

\begin{abstract}
We describe the images of multilinear polynomials of degree up to four on the upper triangular matrix algebra. 

\noindent
{\bf Key words:} multilinear polynomials, upper triangular matrices, Lvov-Kaplansky's conjecture.
\end{abstract}

\section{Introduction}

A famous open problem known as Lvov-Kaplansky's conjecture asserts: the image of a multilinear polynomial in noncommutative variables over a field $\K$ on the matrix algebra $M_{n}(\K)$ is always a vector space \cite{Dniester}.

Recently, Kanel-Belov, Malev and Rowen \cite{Kanel2} made a major breakthrough and solved the problem for $n=2$.

A special case on polynomials of degree two has been known for long time (\cite{Shoda} and \cite{Albert}). Recently, Mesyan \cite{Mesyan} and Buzinski and Winstanley \cite{Buzinski} extended  this result for nonzero multilinear polynomials of degree three and four, respectively.

We will study the following variation of the Lvov-Kaplansky's conjecture:

\begin{con}\label{c1}
The image of a multilinear polynomial on the upper triangular matrix algebra is a vector space.
\end{con}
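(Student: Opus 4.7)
The plan is to exploit the Jacobson radical structure of $UT_n(\K)$ to reduce the problem to a finite case analysis. Decompose $UT_n(\K) = D \oplus J$, where $D$ is the subspace of diagonal matrices and $J$ is the Jacobson radical of strictly upper triangular matrices, and consider the filtration $J \supseteq J^2 \supseteq \cdots \supseteq J^{n-1} \supseteq 0$, where $J^k$ is spanned by the matrix units $E_{ij}$ with $j - i \geq k$. Since $UT_n(\K)/J \cong \K^n$ is commutative, a multilinear polynomial
\[ p(x_1,\ldots,x_m) = \sum_{\sigma \in S_m} \lambda_\sigma x_{\sigma(1)} \cdots x_{\sigma(m)}, \]
evaluated on matrices $A_1,\ldots,A_m \in UT_n(\K)$, reduces modulo $J$ to $\lambda \bar{A}_1 \cdots \bar{A}_m$, where $\bar{A}_i$ is the diagonal part of $A_i$ and $\lambda = \sum_{\sigma \in S_m} \lambda_\sigma$. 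This yields an immediate dichotomy.

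If $\lambda \neq 0$, I would argue that $\mathrm{Im}(p) = UT_n(\K)$. Choosing diagonal inputs $A_1 = \mathrm{diag}(b_1,\ldots,b_n)$ and $A_j = I$ for $j \geq 2$ gives $p(A_1,I,\ldots,I) = \lambda A_1$, so every diagonal matrix is attained; a perturbation argument, replacing some $A_i$ by $A_i + t E_{kl}$ with $k < l$, then shows that arbitrary strictly upper triangular entries can be added on top of a prescribed diagonal. When $\lambda = 0$ the image is contained in $J$, and the analysis must go deeper.

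The heart of the proof is the case $\lambda = 0$. My approach would be to rewrite $p$ as a linear combination of iterated commutators, identify the largest $k$ such that $p$ lies in the $k$-th commutator ideal of the free algebra $\K\langle X\rangle$, and use this to conclude $\mathrm{Im}(p) \subseteq J^k$. The goal is then to show that $\mathrm{Im}(p)$ equals either $\{0\}$ (the polynomial identity case) or all of $J^k$. For degrees up to four, the multilinear polynomials fall into finitely many classes modulo variable renaming, and for each representative I would produce explicit substitutions, built from matrix units $E_{ij}$ and carefully chosen diagonal matrices, whose outputs together span the target subspace.

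The main obstacle I expect is the deepest commutator case, where $p$ belongs to a high $J^k$ but is not a polynomial identity. Additive closure is delicate here because the products $E_{ij} E_{kl}$ vanish unless $j = k$, so the attainable targets are sensitive to combinatorial constraints on the substitutions. In degree four, the proliferation of multilinear forms together with the dependence on $n$ (since $J^k$ only becomes nontrivial once $n > k$) turns this into a substantial bookkeeping task; I anticipate that most of the technical work will consist of these explicit evaluations, supported by auxiliary lemmas pinning down precisely when a given $p$ realizes the full $J^k$ and when it collapses to zero.
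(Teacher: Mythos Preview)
The statement you are attempting to prove is labeled a \emph{conjecture} in the paper and is not proved there in full generality; the paper establishes it only for multilinear polynomials of degree at most four (Theorem~2). So there is no ``paper's own proof'' of the general statement to compare against, and your proposal does not supply one either.

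Your high-level dichotomy on $\lambda = \sum_\sigma \lambda_\sigma$ matches the paper's Proposition~\ref{p1}. But your $\lambda \neq 0$ argument is needlessly indirect: there is no reason to first hit diagonal matrices and then perturb. Since $I$ is central, substituting $A_1 = \lambda^{-1}A$ for an \emph{arbitrary} $A \in UT_n$ and $A_j = I$ for $j \geq 2$ already gives $p(\lambda^{-1}A, I, \dots, I) = A$, so $\mathrm{Im}(p) = UT_n$ in one line; this is exactly what the paper does.

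The genuine gap is in the $\lambda = 0$ case. Your plan is to find the largest $k$ with $p$ in the $k$-th commutator ideal, deduce $\mathrm{Im}(p) \subseteq J^k$, and then show equality (or $\{0\}$). But that last step \emph{is} the conjecture: additive closure of the image is precisely what is at stake, and you offer no mechanism beyond ``explicit substitutions'' and ``substantial bookkeeping.'' The paper, even in the restricted degree-$\leq 4$ setting, needs concrete tools you do not mention: reducing the degree by substituting $x_i = 1$ (so that degree-$3$ and degree-$4$ cases inductively feed on lower degrees), the key lemma $[UT_n^{(k)}, D] = UT_n^{(k)}$ for a diagonal $D$ with distinct entries (Lemma~\ref{l1}), Falk's theorem and the Hall basis to put degree-$4$ polynomials into a normal form, and then a delicate case split (the symmetric case $p(D,D^2,B,C)$ versus the generic case reducing to $[A,B][A,C] + \lambda[A,C][A,B]$). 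Your proposal does not reach any of these, and without them there is no proof even of the degree-$\leq 4$ result, let alone the general conjecture.
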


In this paper, we will answer Conjecture \ref{c1} for polynomials of degree up to four. We point out that whereas in \cite{Buzinski} and \cite{Mesyan} the results describe conditions under which the image of a multilinear polynomial $p$, $Im(p)$, contains a certain subset of $M_n(\K)$, our results give the explicit forms of $Im(p)$ on the upper triangular matrix algebra in each case.

Throughout the paper $UT_{n}$ will denote the set of upper triangular matrices. The set of all strictly upper triangular matrices will be denoted by $UT_{n}^{(0)}$. More generally, if $k\geq 0$, the set of all matrices in $UT_{n}$ whose entries $(i,j)$ are zero, for $j-i\leq k$, will be denoted by $UT_{n}^{(k)}$.  Also if $i,j\in \{1,\dots,n\}$, we denote by $e_{ij}$ the $n\times n$ matrix with 1 in the entry $(i,j)$, and $0$ elsewhere. These will be called matrix units. In particular, $UT_n^{(k)}$ is the vector space spanned by the $e_{ij}$ with $j-i>k$.

Our main goal in this paper is to prove the following:

\begin{teor}
Let $n\geq2$ be an integer.
\begin{itemize}
\item[$(1)$] If $\K$ is an any field and $p$ is a multilinear polynomial over $\K$ of degree two, then $Im(p)$ over $UT_{n}$ is $\{0\}, UT_{n}^{(0)}$ or $UT_{n}$;
\item[$(2)$] If $\K$ is a field with at least $n$ elements and $p$ is a multilinear polynomial over $\K$ of degree three, then $Im(p)$ over $UT_{n}$ is $\{0\}, UT_{n}^{(0)}$ or $UT_{n}$;
\item[$(3)$] If $\K$ is a zero characteristic field and $p$ is a multilinear polynomial over $\K$ of degree four, then $Im(p)$ over $UT_{n}$ is $\{0\}, UT_{n}^{(1)}, UT_{n}^{(0)}$ or $UT_{n}$.
\end{itemize}
\end{teor}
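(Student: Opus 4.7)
The general strategy is to exploit the fact that for any multilinear polynomial $p(x_1, \ldots, x_m) = \sum_{\sigma \in S_m} a_\sigma x_{\sigma(1)} \cdots x_{\sigma(m)}$ and any decomposition $A_k = D_k + R_k \in UT_n$ (with $D_k$ diagonal and $R_k$ strictly upper triangular), the diagonal of $p(A_1, \ldots, A_m)$ equals $\alpha \, D_1 D_2 \cdots D_m$, where $\alpha = \sum_{\sigma} a_\sigma$, since diagonal entries commute. In particular, $Im(p) \subseteq UT_n^{(0)}$ if and only if $\alpha = 0$, and more generally the $(i, i+k)$-entry of $p(A_1, \ldots, A_m)$ depends only on the first $k+1$ diagonals of the $A_l$, which provides a systematic test for containment in $UT_n^{(k)}$.

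The case $\alpha \ne 0$ I would treat uniformly across all three parts: after rescaling to $\alpha = 1$, substituting a scaled diagonal matrix alongside $I_n$ in the remaining slots produces every diagonal matrix; then substitutions of the form $A_j = I_n + t e_{ik}$ and minor variants realize each $e_{ik}$ with $i < k$, yielding $Im(p) = UT_n$. When $\alpha = 0$ we already have $Im(p) \subseteq UT_n^{(0)}$, and the problem reduces to asking which strictly upper triangular matrices arise. For degree two, $p$ is a scalar multiple of the commutator $[x_1, x_2]$, and the identities $[e_{ii}, e_{ij}] = e_{ij}$ settle things at once. For degree three, I would decompose $p$ along the six permutations of $S_3$ and use substitutions with carefully chosen diagonal plus superdiagonal arguments to show that either $p$ is a polynomial identity of $UT_n$ or every matrix unit $e_{ij}$ with $i < j$ lies in $Im(p)$; the hypothesis $|\K| \ge n$ enters when inverting a Vandermonde-type system arising from the scaling parameters.

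The main obstacle is part (3), the degree-four case with $\alpha = 0$, where the new phenomenon $Im(p) \subseteq UT_n^{(1)}$ appears. To detect it, I would set $A_k = D_k + c_k e_{i, i+1}$ and expand $(p(A_1, \ldots, A_4))_{i, i+1}$ as a double sum over $\sigma \in S_4$ and over the position $j$ at which the superdiagonal entry $c_{\sigma(j)}$ is picked up; this produces a polynomial in the quantities $(D_k)_{ii}$, $(D_k)_{i+1, i+1}$ and $c_k$ whose identical vanishing translates into a system of linear equations on the coefficients $a_\sigma$. Solving this system (in characteristic zero, so that the relevant integer matrices are invertible) identifies precisely the family of polynomials with $Im(p) \subseteq UT_n^{(1)}$; outside this family I would show $Im(p) = UT_n^{(0)}$, and within it $Im(p) = UT_n^{(1)}$ unless $p$ is a polynomial identity of $UT_n$. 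The combinatorial load of the 24 permutations of $S_4$ combined with the four possible pickup positions, plus the need to simultaneously control the zeroth and first superdiagonals, is what makes this case substantially more delicate than the lower-degree ones.
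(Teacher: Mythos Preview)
Your proposal contains a recurring and fundamental gap: you repeatedly argue by exhibiting each matrix unit $e_{ij}$ (with $j-i>k$) as an evaluation of $p$, and then conclude $Im(p)=UT_n^{(k)}$. But $Im(p)$ is not known in advance to be a vector space --- that is precisely the content of the conjecture --- so realizing a spanning set is not enough. For instance, in your degree-two argument the identity $[e_{ii},e_{ij}]=e_{ij}$ shows only that each strictly upper triangular matrix unit lies in $Im(p)$; it does \emph{not} show that an arbitrary $A\in UT_n^{(0)}$ can be written as a single commutator $[B,C]$ with $B,C\in UT_n$. The same objection applies to your degree-three sketch (``every matrix unit $e_{ij}$ with $i<j$ lies in $Im(p)$'') and to the surjectivity half of your degree-four plan. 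Even your treatment of the easy case $\alpha\neq 0$ is affected: realizing all diagonal matrices and all $e_{ik}$ separately does not give all of $UT_n$ without closure under addition. (In fact that case is trivial: $p(A,I_n,\dots,I_n)=\alpha A$ already gives every $A\in UT_n$.)

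The paper closes this gap with explicit single-evaluation constructions rather than matrix-unit arguments. For degree two it fixes the Jordan shift $B=\sum_k e_{k,k+1}$ and solves a triangular linear system to write any $A\in UT_n^{(0)}$ as $[B,C]$. For degree three it first substitutes $1$ for a variable to reduce to lower degree whenever possible; in the remaining case $p$ is forced into the form $[x,[z,y]]+\alpha[z,[x,y]]$, and then a key lemma ($[UT_n^{(k)},D]=UT_n^{(k)}$ for $D$ diagonal with distinct entries, which is where $|\K|\ge n$ enters) gives $UT_n^{(0)}=[D,[UT_n,D]]$ directly. For degree four the paper does not analyse the $(i,i+1)$-entry combinatorially as you propose; instead it invokes Falk's theorem and the Hall basis to write $p$ as a sum of iterated commutators plus products of two commutators, and then handles two structural cases, in each case producing an arbitrary $A\in UT_n^{(1)}$ as a single evaluation (again via the Jordan shift and the diagonal-commutator lemma). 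Your entry-by-entry scheme for detecting the containment $Im(p)\subseteq UT_n^{(1)}$ could in principle be made to work, but you would still need, on top of it, a mechanism for producing arbitrary elements of $UT_n^{(1)}$ (or $UT_n^{(0)}$) as single evaluations --- and that mechanism is entirely absent from your proposal.
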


To prove the statement $(1)$ we use some ideas of Shoda \cite{Shoda} and Albert and Muckenhoupt \cite{Albert}, and for statements $(2)$ and $(3)$ we use the polynomial reductions of Mesyan \cite{Mesyan}, \v{S}penko \cite{Spela} and Buzinski and Winstanley \cite{Buzinski}.

\section{The linear span of a multilinear polynomial on $UT_{n}$}

Throughout this section we will denote by $\K$ an arbitrary field and by $p(x_{1},\dots,x_{m})$ a multilinear polynomial in $\K\langle X \rangle$.  We will also denote by $\langle Im(p) \rangle$ the linear span of $Im(p)$ on $UT_{n}$.

We start with an analogous result to Lemma $4$ from \cite{Kanel2}, where we analyse the image of a multilinear polynomial $p(x_{1},\dots,x_{m})\in\K\langle X \rangle$ on upper triangular matrix units.

Let $e_{i_{1},j_{1}},\dots,e_{i_{m},j_{m}}$ be upper triangular matrix units. Then $i_{q}\leq j_{q}$ for all $q$. We know that 
\begin{eqnarray}\label{e1}
e_{i_{1},j_{1}}\cdots e_{i_{m},j_{m}}
\end{eqnarray}
is nonzero (and equal to $e_{i_{1},j_{m}}$) if and only if $j_{q}=i_{q+1}$, for all $q$.

Hence, if we change the order of the product in (\ref{e1}) we will obtain either $0$ or $e_{i_{1},j_{m}}$. To verify this claim, we will assume that we get a nonzero matrix unit after changing the order of some terms in (\ref{e1}). It is enough analyse just when we change the first or the last term. So, if $e_{i_{k},j_{k}}\cdots e_{i_{1},j_{1}}\cdots e_{i_{m},j_{m}}$ is nonzero then $i_{k}\leq i_{1}$, and by the product (\ref{e1}) we also have $i_{1}\leq i_{k}$, which proves that $i_{k}=i_{1}$ and therefore $e_{i_{k},j_{k}}\cdots e_{i_{1},j_{1}}\cdots e_{i_{m},j_{m}}=e_{i_{1},j_{m}}$. Analogously we prove that if $e_{i_{1},j_{1}}\cdots e_{i_{m},j_{m}}\cdots e_{i_{k},j_{k}}$ is nonzero then this product will be $e_{i_{1},j_{m}}$.

In this way, $p$ evaluated on upper triangular matrix units is equal to zero or to some multiple of an upper triangular matrix unit.

\begin{defi}
	Let $A=\displaystyle\sum_{i,j=1}^{n}a_{i,j}e_{i,j}\in UT_{n}$. For each $k\in\{1,\dots,n\}$ the $k$-th diagonal of $A$ is the one with entries in positions $(1,k),(2,k+1),\dots,(n-k+1,n)$. We say that the $k$-th diagonal of $A$ is nonzero if at least one entry in its $k$-th diagonal is nonzero.
\end{defi}

The next lemma shows that if an upper triangular matrix unit can be obtained as an evaluation of a multilinear polynomial on matrix units, then all matrix units in the same diagonal can also be obtained by one such evaluation.

\begin{lema}\label{l2}
Assume that a nonzero multiple of $e_{i,i+k-1}$ can be written as an evaluation of $p$ on upper triangular matrix units, for some $i$ and $k$. Then $e_{1,k},e_{2,k+1},\dots,e_{n-k+1,n}\in Im(p)$.
\end{lema}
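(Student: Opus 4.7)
The plan is to exploit the translation invariance of matrix unit products. I would first record that, by hypothesis, $p(e_{i_1,j_1}, \dots, e_{i_m,j_m}) = \alpha e_{i,i+k-1}$ with $\alpha \neq 0$. From the analysis preceding the statement, each monomial of $p$ evaluated at this tuple is either zero or a scalar multiple of one and the same matrix unit; since the sum equals $\alpha e_{i,i+k-1}$, every nonzero monomial must evaluate to a multiple of $e_{i,i+k-1}$. In particular, picking any such nonzero ordering $\sigma$, the chain $e_{i_{\sigma(1)},j_{\sigma(1)}} \cdots e_{i_{\sigma(m)},j_{\sigma(m)}}$ satisfies $i_{\sigma(1)} = i$, $j_{\sigma(m)} = i+k-1$, and $j_{\sigma(q)} = i_{\sigma(q+1)}$ for all $q$. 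Consequently every $i_q$ and every $j_q$ lies in $\{i, i+1, \dots, i+k-1\}$.

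Next, for each integer $t$ with $1 - i \leq t \leq n - i - k + 1$, I would consider the translated tuple $(e_{i_1+t, j_1+t}, \dots, e_{i_m+t, j_m+t})$. The bounds on $t$, together with the previous paragraph, guarantee that all translated indices stay in $\{1, \dots, n\}$, so these are genuine upper triangular matrix units. The crucial point is that translation preserves the chain structure: for any permutation $\sigma$, the product $e_{i_{\sigma(1)}+t,j_{\sigma(1)}+t} \cdots e_{i_{\sigma(m)}+t,j_{\sigma(m)}+t}$ is nonzero if and only if $j_{\sigma(q)} + t = i_{\sigma(q+1)} + t$ (equivalently, $j_{\sigma(q)} = i_{\sigma(q+1)}$), i.e., if and only if the original product was nonzero, and in that case the product equals $e_{i+t,\, i+k-1+t}$.

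Summing with the same coefficients from $p$ therefore yields $p(e_{i_1+t, j_1+t}, \dots, e_{i_m+t, j_m+t}) = \alpha e_{i+t,\, i+k-1+t}$. Using multilinearity I would rescale a single entry by $\alpha^{-1}$ to obtain $e_{i+t,\, i+k-1+t} \in Im(p)$, and as $t$ ranges over $\{1-i, \dots, n-i-k+1\}$, the row index $i + t$ covers $\{1, 2, \dots, n-k+1\}$, producing all of $e_{1,k}, e_{2,k+1}, \dots, e_{n-k+1,n}$ in $Im(p)$. I do not anticipate any serious obstacle; the only point that requires care is the observation that every index $i_q$ and $j_q$ must lie in $\{i, \dots, i+k-1\}$, which is precisely what provides the room to translate while remaining inside $\{1, \dots, n\}$.
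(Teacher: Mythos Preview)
Your proof is correct and follows exactly the same translation idea as the paper's proof; the paper simply writes $\alpha e_{1,k}=p(e_{i_{1}-i+1,j_{1}-i+1},\dots,e_{i_{m}-i+1,j_{m}-i+1})$ and invokes closure under scalar multiplication, then says the other cases are analogous. Your version is in fact more careful, since you explicitly justify why all indices $i_q,j_q$ lie in $\{i,\dots,i+k-1\}$ and hence why the translated indices remain in $\{1,\dots,n\}$, a point the paper passes over in silence.
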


\begin{proof}
We write $\alpha e_{i,i+k-1}=p(e_{i_{1},j_{1}},\dots,e_{i_{m},j_{m}})$, for some nonzero $\alpha\in\K$. Hence,
$$\alpha e_{1,k}=p(e_{i_{1}-i+1,j_{1}-i+1},\dots,e_{i_{m}-i+1,j_{m}-i+1}),$$
and since $Im(p)$ is closed under scalar multiplication, $e_{1,k}\in Im(p)$. Analogously, we prove that $e_{2,k+1},\dots,e_{n-k+1,n}\in Im(p)$.
\end{proof}

\begin{lema}\label{l4}
Assume that a nonzero multiple of $e_{i,i+k-1}$ can be written as an evaluation of $p$ on upper triangular matrix units, for some $i$ and $k$. Then $e_{i,i+k}\in Im(p)$.
\end{lema}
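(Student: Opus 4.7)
My plan is to produce $e_{i,i+k}$ by modifying a single input matrix unit and using the multilinearity of $p$. Starting from
$$\alpha e_{i,i+k-1}=p(e_{i_1,j_1},\dots,e_{i_m,j_m}),\qquad \alpha\neq 0,$$
and writing $p=\sum_{\sigma\in S_m}a_\sigma x_{\sigma(1)}\cdots x_{\sigma(m)}$, I would first use the preliminary discussion at the start of this section to note that every nonzero product $e_{i_{\sigma(1)},j_{\sigma(1)}}\cdots e_{i_{\sigma(m)},j_{\sigma(m)}}$ is equal to $e_{\min_l i_l,\max_l j_l}$; combined with the hypothesis this forces $\min_l i_l=i$ and $\max_l j_l=i+k-1$.

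Setting $N=\{\sigma\in S_m:e_{i_{\sigma(1)},j_{\sigma(1)}}\cdots e_{i_{\sigma(m)},j_{\sigma(m)}}\neq 0\}$, so that $\alpha=\sum_{\sigma\in N}a_\sigma$, and $L=\{l:j_l=i+k-1\}$, every $\sigma\in N$ satisfies $j_{\sigma(m)}=\max_l j_l=i+k-1$ and hence $\sigma(m)\in L$. Partitioning $N$ by the value of $\sigma(m)$ gives $N=\bigsqcup_{l^*\in L}N_{l^*}$ with $N_{l^*}=\{\sigma\in N:\sigma(m)=l^*\}$, and therefore $\alpha=\sum_{l^*\in L}\alpha_{l^*}$ where $\alpha_{l^*}=\sum_{\sigma\in N_{l^*}}a_\sigma$.

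Next, for each $l^*\in L$ I would replace only the $l^*$-th input $e_{i_{l^*},j_{l^*}}$ by $e_{i_{l^*},i+k}$, which still lies in $UT_n$ because $i_{l^*}\leq j_{l^*}=i+k-1<i+k\leq n$, and leave the other inputs alone. The $i$-indices are unchanged and the new $j$-indices have a unique maximum $i+k$ at position $l^*$, so any permutation $\sigma$ with nonzero contribution in this new evaluation must satisfy $\sigma(m)=l^*$. Once that constraint is imposed the remaining adjacency conditions $j'_{\sigma(q)}=i'_{\sigma(q+1)}$ for $q<m$ only involve indices that were not touched, so they coincide with the original conditions; the set of nonzero permutations in the modified evaluation is therefore exactly $N_{l^*}$, each such product equals $e_{i,i+k}$, and the modified evaluation yields $\alpha_{l^*}\,e_{i,i+k}$.

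Since $\sum_{l^*\in L}\alpha_{l^*}=\alpha\neq 0$, at least one $\alpha_{l^*}$ is nonzero; for that $l^*$ the modified evaluation is a nonzero scalar multiple of $e_{i,i+k}$, and closure of $Im(p)$ under scalar multiplication then yields $e_{i,i+k}\in Im(p)$. The step I expect to be most delicate is verifying that altering only one entry cleanly cuts the contributing permutation set from $N$ down to precisely $N_{l^*}$, with no spurious new permutations and no loss of old ones; both the bookkeeping on the maximum $j$-index (to force $\sigma(m)=l^*$) and the observation that every other adjacency condition is left unchanged have to line up so that the output is a pure scalar multiple of $e_{i,i+k}$ with coefficient exactly $\alpha_{l^*}$, which is what lets the summation identity $\sum_{l^*\in L}\alpha_{l^*}=\alpha$ do the work.
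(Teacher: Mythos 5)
Your proof is correct, and while it is built on the same underlying idea as the paper's --- bump the column index of the inputs that achieve the maximal $j$-index from $i+k-1$ up to $i+k$ --- the execution is genuinely different and, in fact, more careful. The paper replaces \emph{every} input $e_{i_l,j_l}$ with $j_l=i+k-1$ simultaneously and asserts that the evaluation becomes $\alpha e_{i,i+k}$ with the same coefficient $\alpha$. That assertion is problematic when more than one input has $j_l=i+k-1$ (for instance when $e_{i+k-1,i+k-1}$ occurs among the inputs: take $p=xy$ evaluated at $(e_{12},e_{22})$, which gives $e_{12}$, while the simultaneous replacement $(e_{13},e_{23})$ gives $0$); after the wholesale replacement, any permutation placing one of the altered units in a non-final position contributes zero, so the coefficient can drop to $0$. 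Your version replaces the inputs one at a time, shows that the modified evaluation at position $l^*$ picks out exactly the permutations in $N_{l^*}$ (the unique maximal $j$-index forces $\sigma(m)=l^*$, and the remaining adjacency conditions are untouched), and then uses $\sum_{l^*\in L}\alpha_{l^*}=\alpha\neq 0$ to guarantee that at least one modified evaluation is a nonzero multiple of $e_{i,i+k}$. This pigeonhole step is precisely what the paper's argument is missing, so your proof not only establishes the lemma but also repairs the edge case the published proof glosses over. The only mild caveat is the implicit assumption $i+k\leq n$, which is needed for $e_{i,i+k}$ to exist and is exactly the regime in which the lemma is applied.
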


\begin{proof}
We write $\alpha e_{i,i+k-1}=p(e_{i_{1},j_{1}},\dots,e_{i_{m},j_{m}})$ for some nonzero $\alpha\in\K$. Hence $i+k-1=j_{l}$ for some indexes $l\in\{1,\dots,m\}$. Replacing for each $l$ the corresponding $j_{l}$ by $j_{l}+1$ we get $$\alpha e_{i,i+k}=p(e_{i_{1},j_{1}},\dots,e_{i_{l},j_{l}+1},\dots,e_{i_{m},j_{m}})$$ which proves that $e_{i,i+k}\in Im(p)$. 
\end{proof}

If we also denote $UT_{n}$ by $UT_{n}^{(-1)}$, then we have the main result of this section.

\begin{prp}
Let $p$ be a multilinear polynomial over $\K$. Then $\langle Im(p) \rangle$ is either $\{0\}$ or $UT_{n}^{(k)}$ for some integer $k\geq -1$.
\end{prp}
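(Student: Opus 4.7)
The plan is to show that $\langle Im(p)\rangle$ is the linear span of a distinguished set of matrix units which, by Lemmas \ref{l2} and \ref{l4}, is forced to consist of all $e_{i,j}$ lying on or above a single diagonal.

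First I would reduce to evaluations on matrix units. Every element of $UT_{n}$ is a linear combination of the matrix units $e_{i,j}$ with $i\le j$, and $p$ is multilinear, so every element of $Im(p)$ is a linear combination of evaluations of the form $p(e_{i_{1},j_{1}},\dots,e_{i_{m},j_{m}})$. By the discussion preceding Lemma \ref{l2}, each such evaluation is either $0$ or a nonzero scalar multiple of a single matrix unit $e_{i,j}$. Let $S$ denote the set of pairs $(i,j)$ such that some evaluation of $p$ on upper triangular matrix units equals a nonzero multiple of $e_{i,j}$. Then $\langle Im(p)\rangle$ is contained in the span of $\{e_{i,j}:(i,j)\in S\}$, and conversely Lemma \ref{l2} shows that every such $e_{i,j}$ already lies in $Im(p)$. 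Hence $\langle Im(p)\rangle=\operatorname{span}\{e_{i,j}:(i,j)\in S\}$.

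Next I would analyse the shape of $S$. Lemma \ref{l2} tells us immediately that $S$ is a union of complete diagonals: if $S$ meets the $k$-th diagonal, it contains the whole $k$-th diagonal. The key additional input is that $S$ is upward-closed, meaning that if it meets the $k$-th diagonal then it meets every diagonal from the $k$-th up to the $n$-th. This follows by iterating Lemma \ref{l4}: from a nonzero multiple of $e_{i,i+k-1}$ realised as an evaluation on matrix units, the proof of that lemma produces an explicit evaluation on matrix units yielding a nonzero multiple of $e_{i,i+k}$, which itself can then be fed back into Lemmas \ref{l4} and \ref{l2}. The main subtle point, and the reason the iteration works, is precisely that Lemma \ref{l4} is proved by exhibiting a concrete matrix-unit evaluation rather than merely asserting membership in $Im(p)$; otherwise the induction could not be restarted.

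Finally, I would conclude. If $S=\emptyset$ then $\langle Im(p)\rangle=\{0\}$. Otherwise let $k_{0}$ be the smallest index of a diagonal meeting $S$. The analysis above gives $S=\{(i,j):j-i\ge k_{0}-1\}$. Since by definition $UT_{n}^{(m)}$ is spanned by the $e_{i,j}$ with $j-i\ge m+1$, setting $k=k_{0}-2$ we obtain $\langle Im(p)\rangle=UT_{n}^{(k)}$; as $k_{0}\in\{1,\dots,n\}$ we have $k\ge -1$, which is the desired conclusion.
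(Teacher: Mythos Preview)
Your proposal is correct and follows essentially the same route as the paper's proof: reduce to evaluations on matrix units by multilinearity, then use Lemmas~\ref{l2} and~\ref{l4} alternately to show that the matrix units occurring nontrivially fill out all diagonals from the lowest one onward. Your explicit introduction of the set $S$ and your remark that the iteration of Lemma~\ref{l4} relies on its \emph{proof} (which produces a matrix-unit evaluation) rather than on its bare statement are both apt; the paper's version uses the lemmas ``alternatively'' without spelling out this point.
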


\begin{proof}
Assume that $Im(p)$ is nonzero. Hence, if $A=\displaystyle\sum_{i,j=1}^{n}a_{ij}e_{ij}\in Im(p)$ is nonzero, writing $A$ as a linear combination of evaluations of $p$ on upper triangular matrix units, we get that a multiple of $e_{ij}$ belongs to $Im(p)$, for each nonzero $(i,j)$ entry of $A$.

Let $k$ be the minimal integer such that the $k$-th diagonal of some matrix $A=\displaystyle\sum_{i,j=1}^{n}a_{ij}e_{ij}\in Im(p)$ is nonzero. Then there exists some $a_{i,i+k-1}\neq0$ and therefore $\alpha e_{i,i+k-1}=p(e_{i_{1},j_{1}},\dots,e_{i_{m},j_{m}})$ for some nonzero $\alpha\in\K$. By Lemma \ref{l2} all the matrix units  $e_{1,k},\dots,e_{n-k+1,n}$ belong to $Im(p)$. By Lemma \ref{l4} $e_{i,i+k}\in Im(p)$. Using these both lemmas alternatively, we get that $UT_{n}^{(k-2)}\subset \langle Im(p) \rangle$. By the minimality of $k$ we have $\langle Im(p) \rangle = UT_{n}^{(k-2)}$.
\end{proof}

By the above proposition we can restate Conjecture \ref{c1} as

\begin{con}
The image of a multilinear polynomial on the upper triangular matrix algebra is either $\{0\}$ or $UT_{n}^{(k)}$ for some integer $k\geq-1$. 
\end{con}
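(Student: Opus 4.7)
The plan is to exploit the preceding proposition, which already pins down $\langle Im(p) \rangle$ as either $\{0\}$ or $UT_{n}^{(k)}$ for some $k\geq -1$. Since $Im(p)$ is automatically closed under scalar multiplication (for any $\alpha\in\K$ one has $\alpha p(x_{1},\dots,x_{m}) = p(\alpha x_{1},x_{2},\dots,x_{m})$), the conjecture reduces to the additivity statement $Im(p) = \langle Im(p)\rangle$. So the task is to show that every element of $UT_{n}^{(k)}$ is a single evaluation of $p$, not merely a linear combination of evaluations.

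My approach would be to classify $p$ up to operations that preserve $Im(p)$: scaling by a nonzero constant, permuting the variables, and the finer coefficient reductions developed by Mesyan, \v{S}penko, and Buzinski-Winstanley. This should cut the space of multilinear polynomials of a given degree into a manageable list of normal forms. For each normal form I would then split on the value of $\Sigma(p):=\sum_{\sigma\in S_{m}}a_{\sigma}$, the sum of all coefficients of $p$. If $\Sigma(p)\neq 0$, one expects $Im(p)=UT_{n}$, which I would show by freezing all but one variable at a carefully chosen upper triangular matrix (for instance a unit-multiple of the identity on appropriate coordinates) so that $p$ becomes a nonzero scalar times a single free variable. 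If $\Sigma(p)=0$, the image should sit inside $UT_{n}^{(0)}$ (or, in the degree $4$ case, inside $UT_{n}^{(1)}$), and the goal becomes matching this containment by producing every strictly upper triangular (respectively every element of $UT_{n}^{(1)}$) matrix as a single evaluation, using Lemmas \ref{l2} and \ref{l4} together with explicit matrix constructions.

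The main obstacle will be the commutator-type cases, where $\Sigma(p)=0$: one must promote the "sum of evaluations $=$ single evaluation" equality from a linear-span statement to an honest equality of sets. Block-diagonal tricks that work for $M_{n}(\K)$ are unavailable in $UT_{n}$, since $UT_{n}$ does not split as a direct product of proper subalgebras, so the additivity must instead come from ad hoc matrix constructions tailored to each normal form. For degree $2$ this ultimately boils down to the Shoda–Albert theorem that every strictly upper triangular matrix is a commutator; for degrees $3$ and $4$ the case analysis becomes substantially more delicate and is the reason the theorem requires a mild hypothesis on $|\K|$ (respectively $\mathrm{char}\,\K=0$). The absence of a uniform reduction scheme covering all degrees at once is exactly what keeps the full conjecture open past degree $4$.
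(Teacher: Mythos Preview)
This statement is a \emph{conjecture} in the paper, not a proved theorem; the paper offers no proof of it in general and only verifies it for degrees $\leq 4$ in the subsequent sections. Your proposal correctly recognizes this status and outlines essentially the same degree-by-degree strategy the paper actually carries out (reduce via the linear-span proposition to $Im(p)=\langle Im(p)\rangle$, split on whether $\Sigma(p)=\sum_\sigma \alpha_\sigma$ vanishes, and handle each normal form with explicit matrix constructions), so there is nothing further to compare.
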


\section{A technical proposition}

We start with a fact about the image of multilinear polynomials of any degree on $UT_{n}$. We will prove that no subset between $UT_{n}^{(0)}$ and $UT_{n}$ can be the image of a multilinear polynomial over $UT_{n}$. 

\begin{prp}\label{p1}
Let $\K$ be any field, $m\geq2$ an integer and $$p(x_{1},\dots,x_{m})=\sum_{\sigma\in S_{m}}\alpha_{\sigma}x_{\sigma(1)}\cdots x_{\sigma(m)},\alpha_{\sigma}\in\K,$$ a nonzero multilinear polynomial. 

\begin{itemize}
\item[$(1)$] if $\displaystyle\sum_{\sigma\in S_{m}}\alpha_{\sigma}\neq0$, then $Im(p)=UT_{n}$;
\item[$(2)$] if $\displaystyle\sum_{\sigma\in S_{m}}\alpha_{\sigma}=0$ and $UT_{n}^{(0)}\subset Im(p)$, then $Im(p)=UT_{n}^{(0)}$.
\end{itemize}
\end{prp}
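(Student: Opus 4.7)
The plan is to treat the two parts separately, each by a direct substitution. Combined with the preceding proposition, these computations pin down $Im(p)$ in the two cases.

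For part $(1)$, let $c:=\sum_{\sigma\in S_m}\alpha_\sigma\neq 0$. Given any $A\in UT_n$, I would evaluate $p$ at $x_1=c^{-1}A$ and $x_2=\cdots=x_m=I$, where $I$ is the identity matrix. For every permutation $\sigma\in S_m$, the monomial $x_{\sigma(1)}\cdots x_{\sigma(m)}$ has exactly one factor equal to $c^{-1}A$ and the other $m-1$ factors equal to $I$, so it equals $c^{-1}A$ independently of $\sigma$. Summing over $\sigma$ gives $p(c^{-1}A,I,\dots,I)=c\cdot c^{-1}A=A$, which proves $A\in Im(p)$ and hence $Im(p)=UT_n$.

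For part $(2)$, I would pass to the quotient by the (nilpotent) ideal $UT_n^{(0)}$. The quotient $UT_n/UT_n^{(0)}$ is isomorphic to $\K^n$, and is in particular a \emph{commutative} algebra. Writing $\overline{X}$ for the class of $X\in UT_n$ in the quotient and using commutativity, for any $X_1,\dots,X_m\in UT_n$ one has
$$\overline{p(X_1,\dots,X_m)}=\sum_{\sigma\in S_m}\alpha_\sigma \overline{X}_{\sigma(1)}\cdots\overline{X}_{\sigma(m)}=\Bigl(\sum_{\sigma\in S_m}\alpha_\sigma\Bigr)\overline{X}_1\cdots\overline{X}_m=0.$$
Therefore $p(X_1,\dots,X_m)\in UT_n^{(0)}$ for every choice of arguments, so $Im(p)\subset UT_n^{(0)}$, and the hypothesis $UT_n^{(0)}\subset Im(p)$ then yields the equality.

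Neither step presents a genuine obstacle: the whole content is in choosing the substitution $(c^{-1}A,I,\dots,I)$ in $(1)$ and in exploiting that the quotient of $UT_n$ by its Jacobson radical $UT_n^{(0)}$ is commutative in $(2)$. One small point worth flagging is that the argument in $(2)$ does not need the hypothesis $UT_n^{(0)}\subset Im(p)$ to establish the inclusion $Im(p)\subset UT_n^{(0)}$; the hypothesis is only used to turn this inclusion into an equality, which is consistent with the way the proposition will be applied in the following sections.
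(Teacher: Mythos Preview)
Your proof is correct and follows essentially the same approach as the paper: part~(1) is identical, and in part~(2) your passage to the commutative quotient $UT_n/UT_n^{(0)}\cong\K^n$ is just a clean repackaging of the paper's observation that the main diagonal of a product of upper triangular matrices does not depend on the order of the factors.
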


\begin{proof}
If $\displaystyle\sum_{\sigma\in S_{m}}\alpha_{\sigma}\neq0$, then replacing $m-1$ variables by $I_{n}$ (the identity matrix) and the last one by $(\displaystyle\sum_{\sigma\in S_{m}}\alpha_{\sigma})^{-1}A$ where $A$ is any matrix in $UT_{n}$, we get $Im(p)=UT_{n}$, from which $(1)$ follows.

If $\displaystyle\sum_{\sigma\in S_{m}}\alpha_{\sigma}=0$ and $UT_{n}^{(0)}\subset Im(p)$, then let $\tau\in S_{n}$ such that $\alpha_{\tau}\neq0$ (there exists such a permutation because $p\neq0$). Then, $\alpha_{\tau}=-\displaystyle\sum_{\sigma \in S_{m}\setminus \{\tau\}}\alpha_{\sigma}.$

So, $$p(x_{1},\dots,x_{m})=\sum_{\sigma\in S_{m}\setminus\{\tau\}} \alpha_{\sigma}(x_{\sigma(1)}\cdots x_{\sigma(m)}-x_{\tau(1)}\cdots x_{\tau(m)}).$$

Therefore, replacing $x_{1},\dots,x_{m}$ by upper triangular matrices we obtain in each term of the sum above a matrix with just zeros in the main diagonal. Indeed, the main diagonal of a product of upper triangular matrices is the same, regardless of the order.

With this, we conclude that $Im(p)\subset UT_{n}^{(0)}$ and by hypothesis, $Im(p)=UT_{n}^{(0)}$.
\end{proof}

\section{The images of multilinear polynomials of degree two}

We consider a multilinear polynomial of degree two, which has the following form: $p(x,y)=\alpha xy+\beta yx$ for some $\alpha,\beta\in\K$. We will divide the study of the image of $p$ in two cases.

Case 1. $\alpha+\beta\neq0$.

In this case we can use Proposition \ref{p1} (1) and get $Im(p)=UT_{n}$.

Case 2. $\alpha+\beta=0$.

If $\alpha=\beta=0$ then $Im(p)=\{0\}$. Otherwise, we may assume that $p(x,y)=xy-yx$. Let $A=(a_{ij})\in UT_{n}^{(0)}$. Take $B=\displaystyle\sum_{k=1}^{n-1}e_{k,k+1}$ and $C=(c_{ij})\in UT_{n}$. So,
\begin{eqnarray}\label{comutador}
BC-CB&=&(\sum_{k=1}^{n-1}e_{k,k+1})(\sum_{i,j=1}^{n}c_{ij}e_{ij})-(\sum_{i,j=1}^{n}c_{ij}e_{ij})(\sum_{k=1}^{n-1}e_{k,k+1})\\\nonumber
 &=& \sum_{i=1}^{n-1}\sum_{j=2}^{n}(c_{i+1,j}-c_{i,j-1})e_{ij}
\end{eqnarray}

Using $c_{ij}=0$ for $i>j$ , we note that the diagonal entries of the matrix $BC-CB$ above are all zero.

Now we consider the system defined by the equations $c_{i+1,j}-c_{i,j-1}=a_{ij}$. A solution of this system is $c_{1k}=0,k=1,\dots,n$ and $c_{i+1,j}=a_{ij}+a_{i-1,j-1}+\cdots+a_{1,j-(i-1)}$ where $i<j$ and $i=2,\dots,n-1,j=2,\dots,n$.

So, $Im(p)\supset UT_{n}^{(0)}$ and by Proposition \ref{p1} $(2)$, we have $Im(p)=UT_{n}^{(0)}$.

In resume, we have proved the following

\begin{prp}\label{4}
Let $p(x,y)\in\K\langle X \rangle$ be a multilinear polynomial where $\K$ is any field. Then $Im(p)$ on $UT_{n}$ is $\{0\}, UT_{n}^{(0)}$ or $UT_{n}$.
\end{prp}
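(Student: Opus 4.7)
The polynomial has the form $p(x,y)=\alpha xy+\beta yx$ for some $\alpha,\beta\in\K$, so the natural approach is to split on the value of $\alpha+\beta$ and apply Proposition \ref{p1}.

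First, if $\alpha+\beta\neq 0$, Proposition \ref{p1}$(1)$ immediately gives $Im(p)=UT_n$. If instead $\alpha+\beta=0$ and $\alpha=\beta=0$, then $p$ is identically zero and $Im(p)=\{0\}$. Thus only the case $\alpha=-\beta\neq 0$ remains; rescaling (which does not alter the image, since scalars can be absorbed by one of the variables) reduces the problem to $p(x,y)=xy-yx$, i.e.\ the commutator. Since the diagonal of $XY-YX$ vanishes for $X,Y\in UT_n$, we have $Im(p)\subset UT_n^{(0)}$, and Proposition \ref{p1}$(2)$ will close the case as soon as we establish the reverse containment $UT_n^{(0)}\subset Im(p)$.

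The main step, then, is to realize every strictly upper triangular matrix $A=(a_{ij})$ as a commutator $[B,C]$ with $B,C\in UT_n$. The plan is to fix $B$ to be the super-diagonal shift $B=\sum_{k=1}^{n-1}e_{k,k+1}$ and then solve for $C=(c_{ij})\in UT_n$. A direct expansion (the computation recorded in equation \eqref{comutador} in the Case~2 preamble to the statement) shows
$$BC-CB=\sum_{i=1}^{n-1}\sum_{j=2}^{n}(c_{i+1,j}-c_{i,j-1})e_{ij},$$
so the equation $BC-CB=A$ becomes the triangular system $c_{i+1,j}-c_{i,j-1}=a_{ij}$ for $1\leq i<j\leq n$.

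The expected obstacle is producing a concrete solution of this system; fortunately it is solvable by inspection. Setting the entire first row of $C$ to zero and then iterating along each super-diagonal yields the explicit solution
$$c_{i+1,j}=a_{ij}+a_{i-1,j-1}+\cdots+a_{1,j-i+1},$$
valid for $1\leq i<j\leq n$, with all other entries of $C$ set to zero. One verifies directly that this $C$ is upper triangular and that $BC-CB=A$, giving $UT_n^{(0)}\subset Im(p)$ and completing the proof by Proposition \ref{p1}$(2)$.
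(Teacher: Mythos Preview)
Your argument is correct and follows essentially the same route as the paper: the same case split on $\alpha+\beta$, the same reduction to the commutator, the same choice $B=\sum_{k=1}^{n-1}e_{k,k+1}$, and the same explicit solution of the resulting triangular system, with Proposition~\ref{p1} closing each case. The only cosmetic difference is that you state $Im(p)\subset UT_n^{(0)}$ directly before invoking Proposition~\ref{p1}(2), whereas the paper lets that proposition handle the containment.
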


\section{The images of multilinear polynomials of degree three}

To start this section we prove the following lemma, which is a an analogous of Lemma 1.2 of \cite{Amitsur}.

\begin{lema}\label{l1}
Let $\K$ be a field with at least $n$ elements and let $d_{11},\dots,d_{nn}\in\K$ be distinct elements. Then for $D=diag(d_{11},\dots,d_{nn})$ and $k\geq 0$, we have 
\begin{eqnarray}\nonumber
[UT_{n}^{(k)},D]=UT_{n}^{(k)}  \ \mbox{and}\  [UT_{n},D]=UT_{n}^{(0)}.
\end{eqnarray}
\end{lema}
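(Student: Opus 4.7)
The strategy is a direct computation: if $A=\sum_{i\le j}a_{ij}e_{ij}\in UT_n$ and $D=\mathrm{diag}(d_{11},\dots,d_{nn})$, then a one-line calculation with matrix units gives
$$[A,D]=AD-DA=\sum_{i\le j}a_{ij}(d_{jj}-d_{ii})\,e_{ij}.$$
Everything in the lemma follows from reading this formula in both directions.

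For the first assertion, I would argue by double inclusion. The inclusion $[UT_n^{(k)},D]\subset UT_n^{(k)}$ is immediate from the displayed formula, since if $A\in UT_n^{(k)}$ then $a_{ij}=0$ whenever $j-i\le k$, so the same is true of $[A,D]$. For the reverse inclusion, given an arbitrary $B=\sum_{j-i>k} b_{ij}e_{ij}\in UT_n^{(k)}$, I would define
$$A=\sum_{j-i>k}\frac{b_{ij}}{d_{jj}-d_{ii}}\,e_{ij}.$$
This is where the hypothesis that the $d_{ii}$ are distinct (which in turn is the reason we need $|\K|\ge n$) is used: it guarantees $d_{jj}-d_{ii}\ne 0$ for $i\ne j$, so the coefficients are well defined. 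By construction $A\in UT_n^{(k)}$ and $[A,D]=B$.

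For the second assertion $[UT_n,D]=UT_n^{(0)}$ the proof is the same, with one small tweak: the formula for $[A,D]$ automatically kills the diagonal, since the factor $d_{jj}-d_{ii}$ vanishes when $i=j$, regardless of $a_{ii}$. This gives $[UT_n,D]\subset UT_n^{(0)}$. Conversely, for $B\in UT_n^{(0)}$ the same formula $a_{ij}=b_{ij}/(d_{jj}-d_{ii})$ for $j>i$ (and arbitrary, say zero, diagonal) produces a preimage $A\in UT_n$ with $[A,D]=B$.

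There is no real obstacle; the lemma is a bookkeeping statement about the adjoint action of a diagonal matrix with separated spectrum, and the only subtle point worth highlighting is precisely why distinct diagonal entries (hence $|\K|\ge n$) are needed, namely to invert the scalars $d_{jj}-d_{ii}$ that appear in the formula for $[A,D]$.
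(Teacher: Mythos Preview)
Your proof is correct and follows essentially the same approach as the paper: compute $[A,D]$ on matrix units and invert the nonzero scalars $d_{jj}-d_{ii}$ to construct a preimage. The only cosmetic difference is that for the second equality the paper obtains the inclusion $UT_n^{(0)}\subset[UT_n,D]$ by invoking the first equality with $k=0$ (using $UT_n^{(0)}\subset UT_n$), whereas you repeat the explicit preimage construction directly.
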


\begin{proof}	Clearly, $[UT_{n}^{(k)},D]\subset UT_{n}^{(k)}$.
	
	Now, let $A=\displaystyle\sum_{j-i>k}a_{ij}e_{ij}$ be an arbitrary element of $UT_{n}^{(k)}$. Then,
	\begin{eqnarray}\nonumber
	[A,D]&=&AD-DA=(\sum_{i,j=1}^{n}a_{ij}e_{ij})(\sum_{l=1}^{n}d_{ll}e_{ll})-(\sum_{l=1}^{n}d_{ll}e_{ll})(\sum_{i,j=1}^{n}a_{ij}e_{ij})\\\nonumber
	&=&\sum_{j-i>k}^{n}a_{ij}(d_{jj}-d_{ii})e_{ij}
	\end{eqnarray}
	
	Hence, if $B=\displaystyle\sum_{j-i>k}^{n}b_{ij}e_{ij}\in UT_{n}^{(k)}$, we choose $a_{ij}=b_{ij}(d_{jj}-d_{ii})^{-1},$ for $j-i>k$. This proves that $[UT_n^{(k)}, D]\supset UT_n^{(k)}$, and the first equality is proved. 
	
	Now we prove the second equality. It is immediate that $[UT_n,D]\subset UT_n^{(0)}$. Also, since $UT_n^{(0)}\subset UT_n$, we have $[UT_n^{(0)},D]\subset [UT_n,D]$. Hence, from the first equation for $k=0$, we have $UT_n^{(0)}=[UT_n^{(0)},D]\subset [UT_n,D]$. And the second equality is proved.	
%
%
\end{proof}

Following the proof of Theorem 13 of \cite{Mesyan}, we obtain the next theorem, where we determine the image of multilinear polynomials of degree 3 on $UT_{n}$.

\begin{teor}\label{6}
Let $\K$ be a field with at least $n$ elements and let $p(x,y,z)\in \K\langle X \rangle$ be a multilinear polynomial. Then $Im(p)$ is $\{0\}, UT_{n}^{(0)}$ or $UT_{n}$.
\end{teor}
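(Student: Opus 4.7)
The plan is to mirror the degree-two strategy, splitting on whether the sum $\sum_{\sigma\in S_3}\alpha_\sigma$ vanishes. Writing
\[ p(x,y,z)=\sum_{\sigma\in S_3}\alpha_\sigma\, x_{\sigma(1)}x_{\sigma(2)}x_{\sigma(3)}, \]
Proposition \ref{p1}$(1)$ immediately gives $Im(p)=UT_n$ when $\sum_\sigma\alpha_\sigma\neq 0$, so I may assume this sum is zero. Then $Im(p)\subset UT_n^{(0)}$, and by Proposition \ref{p1}$(2)$ it suffices either to show $p$ is the zero polynomial (yielding $Im(p)=\{0\}$) or to establish the reverse inclusion $UT_n^{(0)}\subset Im(p)$.

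The first reduction is to substitute the identity matrix for one variable at a time. Each of $p(x,y,I)$, $p(x,I,z)$, $p(I,y,z)$ is a multilinear polynomial of degree two whose coefficient sum is still zero, hence a scalar multiple of a commutator. By Proposition \ref{4} its image is $\{0\}$ or $UT_n^{(0)}$, so as soon as any one of the three identity-substitutions produces a nonzero polynomial I obtain $Im(p)\supset UT_n^{(0)}$ and the argument is complete.

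The genuinely new case arises when all three identity-substitutions vanish identically. A short linear computation with those three relations together with $\sum_\sigma\alpha_\sigma=0$ reduces the six coefficients to a two-parameter family and produces the normal form
\[ p(x,y,z)=a_1[x,[y,z]]+a_3[y,[x,z]] \]
for some $a_1,a_3\in\K$. If $a_1=a_3=0$ then $p\equiv 0$ and $Im(p)=\{0\}$. Otherwise I would invoke the hypothesis $|\K|\geq n$ to pick a diagonal $D$ with pairwise distinct diagonal entries, and substitute a pair of variables by $D$; the three possible pairings give, using $[D,D]=0$,
\[ p(D,D,z)=(a_1+a_3)[D,[D,z]], \quad p(D,y,D)=-a_1[D,[D,y]], \quad p(x,D,D)=-a_3[D,[D,x]]. \]
Since the three scalars $a_1+a_3$, $a_1$, $a_3$ cannot all vanish unless $a_1=a_3=0$, at least one of these substitutions is a nonzero multiple of $[D,[D,\cdot]]$. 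Iterating Lemma \ref{l1} then yields $[D,[D,UT_n]]=[D,UT_n^{(0)}]=UT_n^{(0)}$, which provides the required inclusion.

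The main obstacle I anticipate is precisely this final case. Once the three identity-substitutions have been forced to vanish, the degree-two machinery no longer applies, and the surviving polynomial is a linear combination of two triple commutators with different internal orderings of $x$ and $y$. The trick of collapsing two variables onto the same diagonal $D$ merges both summands into scalar multiples of the single double-nested commutator $[D,[D,\cdot]]$, and the pigeonhole among the three possible pairings guarantees a nonzero total scalar whenever $p\not\equiv 0$.
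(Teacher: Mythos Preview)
Your proposal is correct and follows essentially the same route as the paper: split on the coefficient sum via Proposition~\ref{p1}, reduce via identity substitutions to the degree-two case, and in the residual case recognize $p$ as a two-parameter combination of triple commutators and kill it with the diagonal $D$ through Lemma~\ref{l1}. The only cosmetic difference is in the final step: the paper renames variables so that one specific coefficient is nonzero and then uses a single substitution $x=y=D$, whereas you test all three pairings $(x,y)$, $(x,z)$, $(y,z)\mapsto(D,D)$ and pigeonhole on the scalars $a_1,a_3,a_1+a_3$; both arguments are equivalent.
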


\begin{proof}
Let $p(x,y,z)\in\K\langle X \rangle$ be a nonzero multilinear polynomial. So, $$p(x,y,z)=\alpha_{1}xyz+\alpha_{2}xzy+\alpha_{3}yxz+\alpha_{4}yzx+\alpha_{5}zxy+\alpha_{6}zyx,\alpha_{l}\in\K.$$ If $\alpha_{1}+\alpha_{2}+\alpha_{3}+\alpha_{4}+\alpha_{5}+\alpha_{6}\neq0$ then using Proposition \ref{p1} (1) we have $Im(p)=UT_{n}$.

Hence, we may assume that $\alpha_{1}+\alpha_{2}+\alpha_{3}+\alpha_{4}+\alpha_{5}+\alpha_{6}=0$. So, we write $p$ as $$p(x,y,z)=\alpha_{1}(xyz-zyx)+\alpha_{2}(xzy-zyx)+\alpha_{3}(yxz-zyx)+\alpha_{4}(yzx-zyx)+\alpha_{5}(zxy-zyx).$$

If any of $p(1,y,z),p(x,1,z)$ or $p(x,y,1)$ are non-zero, then we have by Proposition \ref{4} that $Im(p)$ contains all upper triangular matrices with zero main diagonal. Then by Proposition \ref{p1} we have that $Im(p)$ is $UT_{n}^{(0)}$ or $UT_{n}$.

Otherwise, the equations $p(1,y,z)=p(x,1,z)=p(x,y,1)=0$ imply that $\alpha_{3}=\alpha_{5},\alpha_{2}=\alpha_{4}$ and $\alpha_{1}=-\alpha_{2}-\alpha_{3}$. Therefore, 
\begin{eqnarray}\nonumber
p(x,y,z)&=&(-\alpha_{2}-\alpha_{3})(xyz-zyx)+\alpha_{2}(xzy-zyx+yzx-zyx)\\\nonumber
& & + \ \alpha_{3}(yxz-zyx+yzx-xyz)\\\nonumber
&=& \alpha_{2}(xyz-zyx+yzx-xyz)+\alpha_{3}(yxz-zyx+zxy-xyz)\\\nonumber
&=& \alpha_{2}[x,[z,y]]+\alpha_{3}[z,[x,y]]
\end{eqnarray}

Since $p\neq0$, renaming the variables if necessary, we may assume that $\alpha_{2}\neq0$ and therefore assume $$p(x,y,z)=[x,[z,y]]+\alpha[z,[x,y]],$$ for some $\alpha\in\K$.

By Lemma \ref{l1}, $UT_{n}^{(0)}=[D,UT_{n}^{(0)}]=[D,[UT_{n},D]]$. So, taking $x=y=D$ and $z=A$ any matrix in $UT_{n}$ we get all of $UT_{n}^{(0)}$. So, $Im(p)=UT_{n}^{(0)}$.
\end{proof}

\section{The images of multilinear polynomials of degree four}

In this section we will determinate the image of multilinear polynomials of degree four over a field $\K$ of zero characteristic. 

We start with the following lemma.
%
%
%
%

\begin{lema}\label{l3} 
Let $\K$ be any field. Then $[UT_{n}^{(0)},UT_{n}^{(0)}]=UT_{n}^{(1)}$.
\end{lema}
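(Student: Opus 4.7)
The plan is to prove the set equality by establishing the two inclusions separately. For $[UT_{n}^{(0)},UT_{n}^{(0)}]\subseteq UT_{n}^{(1)}$, I would argue directly from the entries: given $A=(a_{ij})$ and $B=(b_{ij})$ in $UT_{n}^{(0)}$, the $(i,j)$-entry of $AB$ is $\sum_{k}a_{ik}b_{kj}$, and this vanishes unless some index $k$ satisfies $i<k<j$, which already forces $j-i\geq 2$. The same reasoning applies to $BA$, so $[A,B]=AB-BA\in UT_{n}^{(1)}$.

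For the reverse inclusion $UT_{n}^{(1)}\subseteq[UT_{n}^{(0)},UT_{n}^{(0)}]$, I would adapt the construction from Case 2 of Section 4. Given an arbitrary $A=\sum_{j-i>1}a_{ij}e_{ij}\in UT_{n}^{(1)}$, take $B=\sum_{k=1}^{n-1}e_{k,k+1}\in UT_{n}^{(0)}$ and build $C$ by imposing the boundary condition $c_{1k}=0$ together with the recursion $c_{i+1,j}=a_{ij}+c_{i,j-1}$, which unwinds explicitly to $c_{i+1,j}=a_{ij}+a_{i-1,j-1}+\cdots+a_{1,j-i+1}$. By equation~(\ref{comutador}), this choice makes $(BC-CB)_{ij}=a_{ij}$ for every $j\geq i+2$, and the already-proved inclusion guarantees the entries with $j-i\leq 1$ of $[B,C]$ vanish automatically.

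The substantive point, absent in the Section 4 argument, is verifying that the $C$ just constructed lies in $UT_{n}^{(0)}$ rather than merely in $UT_{n}$. Writing out the diagonal entry $c_{i+1,i+1}=a_{i,i+1}+a_{i-1,i}+\cdots+a_{1,2}$, each summand vanishes because every superdiagonal entry of $A$ is zero by the hypothesis $A\in UT_{n}^{(1)}$; thus $C$ is strictly upper triangular. This is the only place the stronger hypothesis $A\in UT_{n}^{(1)}$ (as opposed to $A\in UT_{n}^{(0)}$) is used, and it is really the only potential ``obstacle'' — the bulk of the computation is already carried out in Section 4. I expect no delicate field-of-definition issues, since the recursion uses only additions, so the result holds over an arbitrary field as stated.
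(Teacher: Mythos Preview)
Your proof is correct and follows essentially the same approach as the paper's: both fix the shift matrix $B=\sum_{k=1}^{n-1}e_{k,k+1}$ and use the recursion coming from equation~(\ref{comutador}) to build the second argument of the commutator, arriving at the same explicit solution $c_{i+1,j}=a_{ij}+a_{i-1,j-1}+\cdots+a_{1,j-i+1}$. Your extra paragraph checking that the constructed matrix actually lies in $UT_{n}^{(0)}$ (via the vanishing of the superdiagonal entries of $A$) is a point the paper leaves implicit, but otherwise the two arguments coincide.
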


\begin{proof}
Clearly, $[UT_{n}^{(0)},UT_{n}^{(0)}]\subset UT_{n}^{(1)}$. 

Now, let $A=\displaystyle\sum_{k=1}^{n}e_{k,k+1}\in UT_{n}^{(0)}$ and $B=\displaystyle\sum_{i,j=1}^{n}b_{ij}e_{ij}\in UT_{n}^{(0)}$. The same computations as in equation (\ref{comutador}) yields
\begin{eqnarray}\nonumber
[A,B]&=&
\sum_{i=1}^{n-1}\sum_{j=2}^{n}(b_{i+1,j}-b_{i,j-1})e_{ij}.
\end{eqnarray}
So, for $C=(c_{ij})\in UT_{n}^{(1)}$, the system below has solution
\begin{eqnarray}
\left\{\begin{array}{ccc}\nonumber
b_{23}-b_{12}&=&c_{13}\\
&\vdots &\\
b_{2n}-b_{1,n-1}&=&c_{1n}\\
&\vdots &\\
b_{n-1,n}-b_{n-2,n-1}&=&c_{n-2,n}\\
\end{array}\right..
\end{eqnarray}
Indeed, we may choose $b_{1k}=0, k=2,\dots,n-1$ and $b_{i+1,j}=c_{i,j}+\cdots+c_{1,j-(i-1)},$ $i=1,\dots,n-2,j=3,\dots,n$. Therefore, $C\in [UT_{n}^{(0)},UT_{n}^{(0)}]$.
\end{proof}

Now we prove the main result for polynomials of degree 4. Ou proof is based on the proof of Theorem 1 of \cite{Buzinski}.

\begin{teor}
Let $\K$ be a field of zero characteristic and let $p(x_{1},x_{2},x_{3},x_{4})\in\K\langle X \rangle$ be  a multilinear polynomial. Then the image of $p$ on $UT_{n}$ is $\{0\}, UT_{n}^{(1)},UT_{n}^{(0)}$ or $UT_{n}$.
\end{teor}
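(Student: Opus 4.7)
The plan is to mirror the case analysis of Theorem \ref{6}, adding one extra branch inside the ``proper polynomial'' case that yields the image $UT_n^{(1)}$. Write $p=\sum_{\sigma\in S_4}\alpha_{\sigma}x_{\sigma(1)}x_{\sigma(2)}x_{\sigma(3)}x_{\sigma(4)}$ and set $S=\sum_{\sigma}\alpha_{\sigma}$.

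If $S\neq 0$, Proposition \ref{p1}(1) immediately gives $Im(p)=UT_{n}$. Assume henceforth that $S=0$; then $Im(p)\subset UT_{n}^{(0)}$ by the argument in Proposition \ref{p1}(2). Next, examine the four ``one-variable'' substitutions $p(1,x_{2},x_{3},x_{4})$, $p(x_{1},1,x_{3},x_{4})$, $p(x_{1},x_{2},1,x_{4})$, $p(x_{1},x_{2},x_{3},1)$. If any of them is a nonzero polynomial, then it is a multilinear polynomial of degree three whose coefficient sum is still $S=0$; by Theorem \ref{6} combined with Proposition \ref{p1}(2), its image on $UT_{n}$ must be exactly $UT_{n}^{(0)}$. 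Since this image sits inside $Im(p)$, Proposition \ref{p1}(2) then forces $Im(p)=UT_{n}^{(0)}$.

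The essential case is when all four substitutions vanish identically. In characteristic zero this ``proper'' condition forces $p$ to be a multilinear Lie polynomial (Friedrichs' criterion), so $p$ lies in the six-dimensional space $\mathcal{L}$ of multilinear Lie polynomials in $x_{1},x_{2},x_{3},x_{4}$. Following the reductions of Buzinski-Winstanley \cite{Buzinski}, decompose $\mathcal{L}=V\oplus V'$, where $V$ is the three-dimensional subspace spanned by the ``double commutators'' $[[x_{i},x_{j}],[x_{k},x_{l}]]$ and $V'$ is a complementary three-dimensional subspace spanned by ``triple brackets'' of the form $[x_{i},[x_{j},[x_{k},x_{l}]]]$. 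Write $p=p_V+p_{V'}$ accordingly. If $p_{V'}=0$, each summand of $p$ takes values in $[UT_{n}^{(0)},UT_{n}^{(0)}]=UT_{n}^{(1)}$ by Lemma \ref{l3}, so $Im(p)\subset UT_{n}^{(1)}$; then either $p$ is an identity for $UT_{n}$ (giving $Im(p)=\{0\}$) or an evaluation on suitable matrix units yields a nonzero multiple of $e_{1,3}$, after which Lemmas \ref{l2} and \ref{l4}, combined with an analogue of Proposition \ref{p1}(2) adapted to $UT_{n}^{(1)}$, yield $Im(p)=UT_{n}^{(1)}$. If $p_{V'}\neq 0$, choose an index $i$ such that $p_{V'}$ has nonzero $[x_{i},\cdot]$-component; specialising $x_i$ to a diagonal matrix $D$ with distinct entries, the $V$-part still contributes to $UT_{n}^{(1)}$, while $p_{V'}(D,\cdot)$ produces terms of the form $[D,q(x_{j},x_{k},x_{l})]$ for a nonzero multilinear Lie polynomial $q$ of degree three. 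By Theorem \ref{6} we have $Im(q)=UT_{n}^{(0)}$, and by Lemma \ref{l1}, $[D,UT_{n}^{(0)}]=UT_{n}^{(0)}$. Hence $UT_{n}^{(0)}\subset Im(p)$, and combined with $Im(p)\subset UT_{n}^{(0)}$ we conclude $Im(p)=UT_{n}^{(0)}$.

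The hardest step is the ``pure double commutator'' subcase $p\in V$: one must establish an analogue of Proposition \ref{p1}(2) at the level of $UT_{n}^{(1)}$ --- i.e.\ that, under the coefficient relations imposed by the vanishing of all four substitutions, $p$ attaining a single matrix unit $e_{i,j}$ with $j-i\geq 2$ is enough to conclude $Im(p)=UT_{n}^{(1)}$. This amounts to the careful tracking of coefficient relations that is the technical heart of the Buzinski-Winstanley argument \cite{Buzinski}, here transplanted to the flag structure $UT_{n}\supset UT_{n}^{(0)}\supset UT_{n}^{(1)}\supset\cdots$ using Lemmas \ref{l1} and \ref{l3} in place of the ambient matrix-algebra facts used for $M_{n}$.
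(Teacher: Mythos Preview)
The proposal contains a genuine error at its core. You claim that in characteristic zero, the vanishing of $p(1,x_2,x_3,x_4)$, $p(x_1,1,x_3,x_4)$, $p(x_1,x_2,1,x_4)$ and $p(x_1,x_2,x_3,1)$ forces $p$ to be a Lie polynomial, invoking Friedrichs' criterion. This is false. Friedrichs' criterion characterises Lie elements via primitivity under the coproduct, not via vanishing upon substitution of $1$. The latter condition characterises \emph{proper} polynomials, and by Falk's theorem these are exactly the linear combinations of products of commutators; in degree four this includes, for example, $[x_1,x_2][x_3,x_4]+[x_3,x_4][x_1,x_2]$, which is proper but not a Lie polynomial. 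Consequently your six-dimensional space $\mathcal{L}=V\oplus V'$ misses three dimensions, and the subsequent case analysis does not cover the actual possibilities for $p$. Your handling of the $p_{V'}\neq 0$ branch is also problematic: specialising a single $x_i$ to $D$ in a general combination of triple brackets does not produce an expression of the form $[D,q(x_j,x_k,x_l)]$, since the other triple brackets have $D$ sitting in an inner slot.

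The paper proceeds differently. After the same initial reductions, Falk's theorem gives $p=L+\sum\alpha_{ijkl}[x_i,x_j][x_k,x_l]$ with $L$ a Lie polynomial; writing $L$ in the Hall basis and absorbing the double-commutator Lie terms into the product terms leaves three ``long bracket'' coefficients $\alpha_1,\alpha_2,\alpha_3$ attached to $[[[x_2,x_1],x_3],x_4]$ and its analogues. The key trick is then to substitute a diagonal $D$ with distinct entries for \emph{three} of the four variables simultaneously: this kills every product of commutators (each acquires a factor $[D,D]=0$) and all but one long bracket, reducing to $\alpha_i[[[x_j,D],D],D]$, whose image is $UT_n^{(0)}$ by Lemma~\ref{l1}. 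Only when $\alpha_1=\alpha_2=\alpha_3=0$ is $p$ a pure sum of products $[x_i,x_j][x_k,x_l]$, and then $Im(p)=UT_n^{(1)}$ is established by a further dichotomy on the six coefficients $\alpha_{ijkl}$ and explicit constructions (in one subcase via $x_1=D$, $x_2=D^2$; in the other by reduction to the form $[A,B][A,C]+\lambda[A,C][A,B]$ and solving an explicit linear system). No ``analogue of Proposition~\ref{p1}(2) at the level of $UT_n^{(1)}$'' is formulated or used.
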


\begin{proof}
We may assume that $p\neq0$. If any of $p(1,x_2,x_3,x_4)$, $p(x_1,1,x_3,x_4)$, $p(x_1,x_2,1,x_4)$ or $p(x_1,x_2,x_3,1)$ are nonzero, then by Proposition \ref{p1} and Theorem \ref{6}, we have $Im(p)=UT_{n}^{(0)}$ or $UT_{n}$. So, we may assume that $$ p(1,x_2,x_3,x_4)=p(x_1,1,x_3,x_4)=p(x_1,x_2,1,x_4)=p(x_1,x_2,x_3,1)=0.$$ Then by Falk's theorem \cite{Falk} we have 
\begin{eqnarray}\nonumber
p(x_{1},x_{2},x_{3},x_{4})&=&L(x_{1},x_{2},x_{3},x_{4})+\alpha_{1234}[x_{1},x_{2}][x_{3},x_{4}]+\alpha_{1324}[x_{1},x_{3}][x_{2},x_{4}]\\\nonumber
& &+\alpha_{1423}[x_{1},x_{4}][x_{2},x_{3}]+\alpha_{2314}[x_{2},x_{3}][x_{1},x_{4}]+\alpha_{2413}[x_{2},x_{4}][x_{1},x_{3}]\\\nonumber
& &+\alpha_{3412}[x_{3},x_{4}][x_{1},x_{2}]
\end{eqnarray}

where $L(x_{1},x_{2},x_{3},x_{4})$ is a Lie polynomial and $\alpha_{1234},\alpha_{1324},\alpha_{1423},\alpha_{2314},\alpha_{2413},\alpha_{3412} \in \K$.

Using Hall basis (see \cite{Hall}) we can write 
\begin{eqnarray}\nonumber
L(x_{1},x_{2},x_{3},x_{4})&=&\alpha_{1}[[[x_{2},x_{1}],x_{3}],x_{4}]+\alpha_{2}[[[x_{3},x_{1}],x_{2}],x_{4}]+\alpha_{3}[[[x_{4},x_{1}],x_{2}],x_{3}]\\\nonumber
& & +\alpha_{4}[[x_{4},x_{1}],[x_{3},x_{2}]]+\alpha_{5}[[x_{4},x_{2}],[x_{3},x_{1}]]+\alpha_{6}[[x_{4},x_{3}],[x_{2},x_{1}]],
\end{eqnarray}
where $\alpha_{1},\alpha_{2},\alpha_{3},\alpha_{4},\alpha_{5},\alpha_{6}\in\K$.

Opening the brackets for the three last terms we can assume $p$ as
\begin{eqnarray}\nonumber
p(x_{1},x_{2},x_{3},x_{4})&=&\alpha_{1}[[[x_{2},x_{1}],x_{3}],x_{4}]+\alpha_{2}[[[x_{3},x_{1}],x_{2}],x_{4}]+\alpha_{3}[[[x_{4},x_{1}],x_{2}],x_{3}]\\\nonumber
& & +\alpha_{1234}[x_{1},x_{2}][x_{3},x_{4}]+\alpha_{1324}[x_{1},x_{3}][x_{2},x_{4}]+\alpha_{1423}[x_{1},x_{4}][x_{2},x_{3}]\\\nonumber
& & +\alpha_{2314}[x_{2},x_{3}][x_{1},x_{4}]+\alpha_{2413}[x_{2},x_{4}][x_{1},x_{3}] +\alpha_{3412}[x_{3},x_{4}][x_{1},x_{2}].
\end{eqnarray}

Now suppose that for some $i=1,2,3$ we have $\alpha_{i}\neq0$. Without loss of generality, we may assume that $\alpha_{1}\neq0$. So, replacing $x_{1},x_{3}$ and $x_{4}$ by $D=diag(d_{11},\dots,d_{nn})$ with $d_{11},\dots,d_{nn}$  distinct elements in $\K$, we get $p(D,x_{2},D,D)=\alpha_{1}[[[x_{2},D],D],D]$. Now, using Lemma \ref{l1} we have $Im(p)=UT_{n}^{(0)}$. So, we may assume that $\alpha_{1}=\alpha_{2}=\alpha_{3}=0$ and then 
\begin{eqnarray}\nonumber
p(x_{1},x_{2},x_{3},x_{4})&=&\alpha_{1234}[x_{1},x_{2}][x_{3},x_{4}]+\alpha_{1324}[x_{1},x_{3}][x_{2},x_{4}]+\alpha_{1423}[x_{1},x_{4}][x_{2},x_{3}]\\\nonumber
& & +\alpha_{2314}[x_{2},x_{3}][x_{1},x_{4}]+\alpha_{2413}[x_{2},x_{4}][x_{1},x_{3}]+\alpha_{3412}[x_{3},x_{4}][x_{1},x_{2}]. 
\end{eqnarray}

Clearly, $Im(p)\subset UT_{n}^{(1)}$.

We will consider two cases.

Case 1. Assume $\alpha_{1234}=\alpha_{2314}=\alpha_{3412}=\alpha_{1423}=-\alpha_{1324}=-\alpha_{2413}$. Then we may assume that
\begin{eqnarray}\nonumber
p(x_{1},x_{2},x_{3},x_{4})&=&[x_{1},x_{2}][x_{3},x_{4}]+[x_{3},x_{4}][x_{1},x_{2}]+[x_{2},x_{3}][x_{1},x_{4}]+[x_{1},x_{4}][x_{2},x_{3}]\\\nonumber
& & -[x_{1},x_{3}][x_{2},x_{4}]-[x_{2},x_{4}][x_{1},x_{3}].
\end{eqnarray}

Consider $A\in UT_{n}^{(1)}$. Let $D=diag(d_{11},\dots,d_{nn})$ where $d_{11},\dots,d_{nn}$ are all distinct elements of $\K$. Then, by Lemma \ref{l1} there exists $G\in UT_{n}^{(1)}$ with $A=[D,G]$. By Lemma \ref{l3} there are $E,F\in UT_{n}^{(0)}$ such that $G=[E,F]$. Again by Lemma \ref{l1} we have $B,C\in UT_{n}$ such that $E=[D,B]$ and $F=[D,C]$. So, $A=[D,[[D,B],[D,C]]]$. Observing that
\begin{eqnarray}\nonumber
p(D,D^{2},B,C)&=&[D,D^{2}][B,C]+[B,C][D,D^{2}]+[D^{2},B][D,C]+[D,C][D^{2},B]\\\nonumber
& &-[D,B][D^{2},C]-[D^{2},C][D,B]\\\nonumber
&=&[D^{2},B][D,C]+[D,C][D^{2},B]-[D,B][D^{2},C]-[D^2,C][D,B]\\\nonumber
&=&[D,[[D,B],[D,C]]],
\end{eqnarray}

we have $A\in Im(p)$, proving in this way that $Im(p)=UT_{n}^{(1)}$.

Case 2. Assume that at least one of following $\alpha_{1234}=\alpha_{2314}=\alpha_{3412}=\alpha_{1423}=-\alpha_{1324}=-\alpha_{2413}$ does not hold. So, there are $A,B,C \in UT_{n}$ such that at least one of the following expressions is not zero: 
\begin{eqnarray}\nonumber
p(A,A,B,C)&=&(\alpha_{1324}+\alpha_{2314})[A,B][A,C]+(\alpha_{1423}+\alpha_{2413})[A,C][A,B],\\\nonumber
p(A,B,A,C)&=&(\alpha_{1234}-\alpha_{2314})[A,B][A,C]+(\alpha_{3412}-\alpha_{1423})[A,C][A,B],\\\nonumber
p(A,B,C,A)&=&(-\alpha_{1234}-\alpha_{2413})[A,B][A,C]+(-\alpha_{1324}-\alpha_{3412})[A,C][A,B],\\\nonumber
p(B,A,A,C)&=&(-\alpha_{1234}-\alpha_{1324})[A,B][A,C]+(-\alpha_{2413}-\alpha_{3412})[A,C][A,B],\\\nonumber
p(B,A,C,A)&=&(-\alpha_{1423}+\alpha_{1234})[A,B][A,C]+(\alpha_{3412}+\alpha_{2314})[A,C][A,B],\\\nonumber
p(B,C,A,A)&=&(\alpha_{1324}+\alpha_{1423})[A,B][A,C]+(\alpha_{2314}+\alpha_{2413})[A,C][A,B].
\end{eqnarray}

Therefore, we may reduce the problem to prove that with the expression $$[A,B][A,C]+\lambda [A,C][A,B], \lambda \in \K,$$ we get all elements in $UT_{n}^{(1)}$. Using Lemma \ref{l1} and taking $A=diag(a_{11},\dots,a_{nn})$ where all $a_{11},\dots,a_{nn}$ are distinct elements of $\K$, there exist $B\in UT_{n}$ such that $\displaystyle\sum_{k=1}^{n-1}e_{k,k+1}=[A,B]$. Writing $[A,C]=\displaystyle\sum_{i,j=1}^{n}b_{ij}e_{ij}$ we have
\begin{eqnarray}\nonumber
[A,B][A,C]+\lambda [A,C][A,B]&=&(\sum_{k=1}^{n-1}e_{k,k+1})(\sum_{i,j=1}^{n}b_{ij}e_{ij})+\lambda(\sum_{i,j=1}^{n}b_{ij}e_{ij})(\sum_{k=1}^{n-1}e_{k,k+1})\\\nonumber
&=&\sum_{i=1}^{n-1}\sum_{j=2}^{n}(b_{i+1,j}+\lambda b_{i,j-1})e_{ij}
\end{eqnarray}

So, for an arbitrary $M=(c_{ij})\in UT_{n}^{(1)}$, 
 the system below has solution.
\begin{eqnarray}
\left\{\begin{array}{ccc}\nonumber
b_{23}+\lambda b_{12}&=&c_{13}\\
&\vdots &\\
b_{2n}+\lambda b_{1,n-1}&=&c_{1n}\\
&\vdots &\\
b_{n-1,n}+\lambda b_{n-2,n-1}&=&c_{n-2,n}\\
\end{array}\right..
\end{eqnarray}
Indeed, we may choose $b_{1k}=0, k=2,\dots,n-1$ and $$b_{i+1,j}=c_{i,j}-\lambda c_{i-1,j-1}+\cdots+(-\lambda)^{i-1}c_{1,j-(i-1)},i=1,\dots,n-2,j=3,\dots,n.$$ 

Therefore, $M\in Im(p)$, proving that $Im(p)=UT_{n}^{(1)}$.
\end{proof}

\section*{Acknowledgments}

This work was completed when the first author visited Kent State University. The first author would like to thank the Department of Mathematical Sciences of Kent State University for its hospitality. The authors would like to thank Dr. Mikhail Chebotar for the helpful comments and the anonymous referee for his/her useful suggestions that much improved the final version of this paper.

\section*{Funding}

The first author was supported by São Paulo Research Foundation (FAPESP), grants \# 2017/16864-5 and \# 2016/09496-7.
The second author was supported by São Paulo Research Foundation (FAPESP), grant \# 2014/09310-5 and by National Council for Scientific and Technological Development (CNPq), grant \# 461820/2014-5.

\end{document}